\documentclass[12pt]{article}
\usepackage{amsmath}
\usepackage{amsfonts}
\usepackage{amssymb}
\usepackage{mathtools}
\usepackage{amsthm}
\usepackage{tensor}
\usepackage{enumerate}
\usepackage{tikz}
\usepackage{url}
\newtheorem{theorem}{Theorem}
\newtheorem{problem}{Problem}
\newtheorem{proposition}{Proposition}
\newtheorem{corollary}{Corollary}

\newcommand{\D}{\nabla}
\newcommand{\R}{\mathbb{R}}
\newcommand{\C}{\mathbb{C}}
\newcommand{\ol}{\overline}
\newcommand{\ot}{\otimes}
\newcommand{\op}{\oplus}

\newcommand{\SO}{SO}
\newcommand{\SU}{SU}
\renewcommand{\leq}{\leqslant}
\renewcommand{\geq}{\geqslant}
\DeclareMathOperator{\sgn}{sgn}

\author{Shun-ichi Amari\footnote{RIKEN Brain Science Institute, Saitama 351-0198, Japan \tt{amari@brain.riken.jp}}
\and John Armstrong\footnote{Dept.\ of Mathematics, King's College London, UK. \tt{john.1.armstrong@kcl.ac.uk}}
}
\title{Curvature of Hessian manifolds}

\begin{document}

\maketitle

\begin{abstract}
We prove that, in dimensions greater than $2$, the generic metric is not a Hessian metric and find a curvature condition on Hessian metrics in dimensions greater than $3$. In particular we prove that the forms used to define the Pontryagin classes in terms of the curvature vanish on a Hessian manifold. By contrast all analytic Riemannian $2$-metrics are Hessian metrics.
\end{abstract}

\section{Introduction}

A Riemannian metric $g$ is called a Hessian metric if there exist local coordinates such that $g$ can be written as the Hessian of some convex potential function $\phi$. This paper is motivated by the question of determining whether or not a given metric $g$ is Hessian.

Hessian metrics have been shown to play an important role in a variety of applications. For example they arise in the study of optimization \cite{nesterov1994interior}, statistical manifolds \cite{amari} and, via special K\"ahler manifolds, in string theory \cite{hitchin1999moduli,freed1999special}.

A recurring theme in the study of Hessian metrics is that of duality. The Legendre--Fenchel transform provides a basic notion of duality for convex functions \cite{rockafellar} which has many important applications. This duality manifests itself in Hessian geometry in the study of affine connections.

Given any affine connection $\ol{\D}$ on a Riemannian manifold $(M,g)$, we define the ``$g$-dual connection'' $\ol{\D}^*$ by:
\[ g( \ol{\D}_X Y, Z ) = g( Y, \ol{\D}^*_X Z ). \]
The Levi--Civita connection is self dual. A $g$-dually flat structure is a pair of $g$-dual connections which are both flat.
A metric locally admits a $g$-dually flat structure if and only if it is Hessian. To see why, one simply needs to know that the geodesics of $g$-dually flat connection define local coordinates with respect to which $g$ is a Hessian metric and the converse is also true \cite{shima,amari}. This gives a dictionary between Hessian geometry and the geometry of affine connections. This dictionary is not entirely trivial: for example, the duality between $\ol{\D}$ and $\ol{\D}^*$ translates into the Legendre transform of the potential $\phi$. As we shall see,
results which are easy to prove from the perspective of affine connections can be harder to understand from the perspective of Hessian geometry and vice versa.

The issue of determining whether a metric $g$ is a Hessian metric was raised in \cite{furuhata, amari} in the language of $g$-dually flat connections. They posed the following basic questions:
\begin{problem}
Let $(M,g)$ be a Riemannian manifold, does there always exist a dually flat structure on $M$, i.e.\ a pair of $g$-dual flat, torsion-free affine connections?
\end{problem}
\begin{problem}
If the answer to Problem 1 is negative, find conditions and invariants which characterize the spaces for which this is possible.
\end{problem}

We interpret these questions as being essentially local questions. So these problems are equivalent to determining whether $g$ is a Hessian metric. We will show that the answer to Problem 1 is positive in dimension $2$ and negative in dimensions greater than $2$. These results have been found independently by Robert Bryant \cite{bryant} and follow from applications of general Cartan--K\"ahler theory.

We also find an explicit curvature condition which must hold for a dually flat structure to exist in dimensions greater than or equal to $4$ and examine its implications. In particular we will prove that the {\em Pontryagin forms} must vanish on a Hessian manifold. (We define the Pontryagin {\em forms} to be the closed forms defined using polynomials in the curvature tensor which provide representatives for the Pontryagin {\em classes}). Thus we obtain a topological obstruction to the existence of a Hessian metric.

The existence of a Hessian metric does not imply the global existence of a $g$-dually flat structure. For example, quotients of hyperbolic space always admit a Hessian metric but may have non-vanishing Euler characteristic which means that they cannot admit a flat connection. The problem of determining whether a manifold $M$ admits any global dually flat structure was considered in \cite{ay} --- but as we have seen this is quite a different problem to determining if it admits a Hessian metric.

The structure of the paper is as follows. In Section \ref{section:counting} we prove that the generic metric of dimension $\geq 3$ is not a Hessian metric. In Section \ref{section:curvature} we find the curvature obstruction and examine its implications. In Section \ref{section:cartanKahler} we prove that all analytic $2$-metrics are Hessian.

\section{A counting argument for Problem 1}
\label{section:counting}

\begin{theorem}
In dimension $n \geq 3$, a generic Riemannian metric $(M^n,g)$ does not admit a compatible dually flat structure, even locally.
\end{theorem}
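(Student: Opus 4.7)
The plan is a dimension count on the relevant spaces of jets. A Hessian germ at a point $p$ is, by definition, one that can be written as
\[
g_{ij}(y) \;=\; \frac{\partial x^a}{\partial y^i}\,\frac{\partial x^b}{\partial y^j}\,\frac{\partial^2 \phi}{\partial x^a \partial x^b}
\]
for some local coordinates $x^a(y)$ and convex potential $\phi(x)$, so such germs are parameterised by the $n+1$ functions $(x^1,\dots,x^n,\phi)$ of $n$ variables, whereas arbitrary Riemannian metric germs require the $n(n+1)/2$ functions $g_{ij}$. The inequality $n+1 < n(n+1)/2$ holds precisely when $n\geq 3$, so heuristically the Hessian metrics form a subset of positive codimension; the job is to upgrade this to a rigorous statement at the level of finite jets.

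To do so, I would fix $p\in M$ and pass to $k$-jets at $p$ for large $k$. The space of $k$-jets of symmetric $2$-tensors has dimension $\tfrac{n(n+1)}{2}\binom{n+k}{k}$, while the $k$-jet of a metric of the above form depends only on the $(k+1)$-jet of $x$ and the $(k+2)$-jet of $\phi$, so the set of Hessian $k$-jets is contained in the image of the polynomial map
\[
\Psi_k\colon J^{k+1}_p(\R^n,\R^n)\times J^{k+2}_p(\R)\longrightarrow J^k_p(S^2T^*M),
\]
whose source has dimension $n\binom{n+k+1}{k+1}+\binom{n+k+2}{k+2}$. Every binomial coefficient here is asymptotic to $k^n/n!$, so for $n\geq 3$ the target exceeds the source in dimension by approximately $\tfrac{(n+1)(n-2)}{2}\binom{n+k}{k}$, a gap that grows without bound in $k$ and is not closed by the finite-dimensional gauge of affine reparametrisations of $x$ and addition of an affine function to $\phi$.

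Therefore, for $k$ sufficiently large, the image of $\Psi_k$ is contained in a proper real-algebraic subvariety $H_k\subsetneq J^k_p(S^2T^*M)$ of codimension exceeding $\dim M$, the bound being uniform in $p$. Thom's jet transversality theorem then produces a residual set of metrics, in the Whitney $C^\infty$ topology, whose $k$-jet section $j^k g\colon M\to J^k(S^2T^*M)$ misses $H_k$ entirely; no such metric is Hessian near any point, which is the claim. The main obstacle is the bookkeeping: one must be careful to verify that the finite-dimensional gauge does not close the naive codimension gap, and that the image of $\Psi_k$ really lies in a proper algebraic subvariety (which follows from the standard fact that the image of a polynomial map of smaller source dimension is contained in a proper Zariski-closed subset).
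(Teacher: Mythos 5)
Your proposal is correct and rests on exactly the same jet-dimension count as the paper: the Hessian $k$-jets at a point are the image of a polynomial map whose source, of dimension roughly $(n+1)k^n/n!$, is eventually smaller than the target of dimension roughly $\tfrac{n(n+1)}{2}k^n/n!$ once $n\geq 3$. The only difference is that you go one step further than the paper in making ``generic'' precise (proper algebraic subvariety of large codimension plus Thom jet transversality), whereas the paper stops at the statement that the generic $k$-jet of a metric is not attained; this extra care is welcome but does not change the substance of the argument.
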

\begin{proof}
If $(M^n,g)$ admits a dually flat structure then, in the neighbourhood of any point $p$, there exist local coordinates $x:M^n \longrightarrow {\mathbb{R}}^n$ and a potential $\phi$ such that in these coordinates the metric satisfies:
\[
g_{ij} = \frac{\partial^2 \phi}{\partial x_i \partial x_j}.
\]
Thus the $k$-jet of $g$ at $p$ is determined by the $(k+2)$-jet of $x$ and $\phi$ at $p$.

If we fix some reference coordinates around $p$, then the coordinate function $x$ is defined by $n$ real valued functions in $n$ variables. The coordinate $\phi$ is a single real valued function of $n$ variables. Thus the dimension of the space of $(k+2)$-jets of $(x,\phi)$ at $p$ is equal to:
\[ \dim J_{k+2}(x,\phi):=\sum_{i=0}^{k+2} (n+1) \dim (S^i T_p) =
\sum_{i=0}^{k+2} (n+1)\binom{ n + i - 1}{i}. \]

In these same reference coordinates, the metric $g$ is defined by $\frac{n(n+1)}{2}$ real valued functions. So the space of $k$-jets of metrics at $p$ has dimension given by:
\[ \dim J_{k}(g):=\sum_{i=0}^k \frac{n(n+1)}{2} \dim (S^i T_p) =
\sum_{i=0}^{k} \frac{n(n+1)}{2}\binom{ n + i - 1}{i}. \]
Thus:
\[
\dim J_{k}(g) - \dim J_{k+2}(x,\phi) =
(n+1)(a_{k,n} - b_{k,n} ) \]
where
\begin{eqnarray*}
a_{k,n} & := & \left( \frac{n}{2}-1 \right) \sum_{i=1}^k  \binom{n+1-i}{i}, \\
b_{k,n} & := & \binom{n+k}{k+1} + \binom{n+k+1}{k+2}.
\end{eqnarray*}
For fixed $n>2$, $a_{k,n}$ grows as order $k^n$ whereas $b_{k,n}$ grows as order $k^{n-1}$. So for sufficiently large $k$, $\dim J_{k}(g) > \dim J_{k+2}(x,\phi)$.

It follows that if $n>2$, for sufficiently large $k$, the generic $k$-jet of a metric tensor does not admit any compatible dually flat structure no matter how one extends this $k$-jet to a smooth metric.
\end{proof}

This counting argument can be summarized by saying that the number of metrics depends upon $\frac{1}{2}n(n+1)$ functions of $n$ variables, whereas the data for a Hessian structure depends upon only $n+1$ functions of $n$ variables \cite{bryant}. Our dimension counting merely makes this argument precise.

\section{A curvature obstruction in dimensions $\geq 4$}
\label{section:curvature}

Our aim in this section is to find more concrete obstructions to the existence of Hessian metrics. The key results are the following  \cite{shima}:

\begin{proposition}
Let $(M,g)$ be a Riemannian manifold.
Let $\D$ denote the Levi--Civita connection and let $\ol{\D} = \D + A$ be a $g$-dually flat connection. Then
\begin{enumerate}[(i)]
\item
The tensor $A_{ijk}$ lies in $S^3 T^*$. We shall call it the {\em $S^3$-tensor} of $\ol{\D}$.
\item
The $S^3$-tensor determines the Riemann curvature tensor as follows:
\begin{equation}
\label{eqn:curvatureFromS3}
R\indices{_{ijkl}} = -g^{ab} A\indices{_{ika}}A\indices{_{jlb}}
+ g^{ab} A\indices{_{ila}}A\indices{_{jkb}}.
\end{equation}
\end{enumerate}
\end{proposition}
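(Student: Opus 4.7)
The plan is to exploit simultaneously the torsion-freeness of both $\ol{\D}$ and $\ol{\D}^*$, the duality relation between them, and their joint flatness. The whole argument lives at the level of the difference tensor $A$ and reduces to two short computations.

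For part (i), torsion-freeness of $\ol{\D}$ and $\D$ implies at once that $A^k_{ij}$ is symmetric in its lower two indices, so $A_{ijk}=A_{jik}$. Next, writing $\ol{\D}^* = \D + A^*$ and equating the two expressions for $X g(Y,Z)$ coming from $\D g = 0$ and from the duality definition, I would obtain $A^*_{ijk} = -A_{ikj}$. Since $\ol{\D}^*$ is also torsion-free, $A^*$ must be symmetric in its first two indices, which translates into $A_{ikj} = A_{jki}$. The two symmetries --- swap of the first two indices and swap of the first and third --- generate the full permutation group on three letters, so $A$ is totally symmetric. As a by-product $A^*_{ijk} = -A_{ijk}$, that is, $\ol{\D}^* = \D - A$.

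For part (ii), I would expand the curvature of $\ol{\D} = \D + A$ in terms of $R$, $A$ and $\D A$. Because $\D$ is torsion-free, the standard computation yields the clean identity
\[
\ol R(X,Y)Z = R(X,Y)Z + (\D_X A)(Y,Z) - (\D_Y A)(X,Z) + [A_X, A_Y]Z,
\]
and flatness of $\ol{\D}$ sets the left-hand side to zero. Applying the same computation to $\ol{\D}^* = \D - A$, the $\D A$ terms flip sign while the commutator $[A_X, A_Y]$ is unchanged; flatness of $\ol{\D}^*$ then provides a second vanishing equation. Adding the two relations kills the covariant derivatives of $A$ entirely and leaves $R(X,Y)Z = -[A_X, A_Y]Z$. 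Lowering the free index with $g$, renaming dummy indices and invoking the total symmetry of $A$ from part (i) produces the stated formula (\ref{eqn:curvatureFromS3}) up to the author's index placement on $R_{ijkl}$.

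The main obstacle is not conceptual but organisational: one has to keep the three symmetries of $A_{ijk}$, the sign of $A^*$, and the convention for the index placement on $R_{ijkl}$ all consistent simultaneously. The elegance of the final formula hinges on the cancellation of the $\D A$ terms, which is what forces us to use \emph{both} members of the dually flat pair --- flatness of $\ol{\D}$ alone would not suffice.
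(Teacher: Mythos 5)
Your proof is correct, and part (i) is essentially the paper's argument spelled out in full: symmetry of $A$ in the first two indices from torsion-freeness of $\ol{\D}$, and the remaining symmetry from torsion-freeness of $\ol{\D}^*$ via the relation $A^*_{ijk}=-A_{ikj}$ forced by duality and $\D g=0$; the paper compresses this to the single sentence that dual torsion-freeness means $A\in S^3T^*$. In part (ii) you take a genuinely different route to the same cancellation. The paper uses only the flatness of $\ol{\D}$ and then kills the $\D A$ term by a symmetry-type argument: after lowering, $R$ lies in $\Lambda^2 T\ot\Lambda^2 T$ while $(\D_{[\cdot}A)_{\cdot]}$ lies in $\Lambda^2 T\ot S^2 T$, so projecting the expanded equation $0=\ol{R}=R+2(\D_{[X}A)_{Y]}+2A_{[X}A_{Y]}$ onto $\Lambda^2\ot\Lambda^2$ removes the derivative term. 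You instead invoke the flatness of $\ol{\D}^*=\D-A$ explicitly and average the two vanishing curvature expansions, exploiting that the $\D A$ term is odd in $A$ while the commutator term is even. The two mechanisms are really the same computation in disguise --- the $g$-adjoint of the $\ol{\D}$-flatness equation in its last two slots is precisely the $\ol{\D}^*$-flatness equation, so your ``averaging'' is the paper's ``projection onto the antisymmetric part'' --- but your version avoids having to identify the symmetry type of $\D_{[\cdot}A_{\cdot]}$, and it yields the Codazzi-type identity $(\D_{[X}A)_{Y]}=0$ for free upon subtracting rather than adding. Your final index expression comes out as the negative of equation \eqref{eqn:curvatureFromS3} under the convention $R_{ijkl}=g(R(e_i,e_j)e_k,e_l)$ with $R(X,Y)=\D_X\D_Y-\D_Y\D_X-\D_{[X,Y]}$; since the paper never fixes its curvature sign and lowering conventions, flagging this rather than chasing it is reasonable, but you should state your conventions explicitly if you write this up.
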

\begin{proof}
$A \in T^* \ot T^* \ot T$. The condition that $\ol{\D}$ is torsion free is equivalent to requiring that $A \in S^2 T^* \ot T$. Using the metric to identify $T$ and $T^*$, the condition that $\ol{\D}$ is dually torsion free can be written as $A \in S^3 T^*$. 

Expanding the formula $\ol{R}_{XY}Z = \ol{\D}_X \ol{\D}_Y Z - \ol{\D}_Y \ol{\D}_X - \ol{\D}_{[X,Y]}Z$ in terms of $\D$ and $A$, one obtains the following curvature identity:
\begin{equation}
\label{eqn:coordinateFreeCurvature}
\ol{R}_{XY} Z = R_{XY} Z + 2( \D_{[X}A)_{Y]}Z + 2 A_{[X}A_{Y]}Z. \end{equation}
Here $\ol{R}=0$ is the curvature of $\ol{\D}$ and the square brackets denote anti-symmetrization. Since $\ol{\D}$ is dually flat $\ol{R}=0$.

Continuing to use the metric to identify $T$ and $T^*$, the symmetries of the curvature tensor tell us that $R \in \Lambda^2 T \ot \Lambda^2 T$. On the other hand, $(\D_{[\cdot}A)_{\cdot]} \in \Lambda^2 T \ot S^2 T$. Thus if one projects equation
\eqref{eqn:coordinateFreeCurvature} onto $\Lambda^2 T \ot \Lambda^2 T$ one obtains the curvature identity \eqref{eqn:curvatureFromS3}.
\end{proof}
We define a quadratic equivariant map $\rho$ from $S^3 T^* \longrightarrow \Lambda^2 T^* \ot \Lambda^2 T^*$ by:
\[ \rho( A_{ijk} ) = -g^{ab} A\indices{_{ika}}A\indices{_{jlb}}
+ g^{ab} A\indices{_{ila}}A\indices{_{jkb}} \]
\begin{corollary}
In dimensions $>4$ the condition that $R$ lies in the image of $\rho$ gives a non-trivial necessary condition for a metric $g$ to be a Hessian metric.
\end{corollary}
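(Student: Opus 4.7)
The plan is a pointwise dimension count at a fixed point $p$. The map $\rho: S^3 T_p^* \to \mathcal{R}$ is a $\mathrm{GL}(n)$-equivariant quadratic map between finite-dimensional real vector spaces, where $\mathcal{R}$ denotes the space of algebraic tensors with the symmetries of a Riemann curvature tensor (which, by the proposition above, is where $\rho$ lands). I would show that for $n>4$ the target $\mathcal{R}$ is strictly larger than the source, so that the image of $\rho$ cannot exhaust $\mathcal{R}$.

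The heart of the argument is the pair of dimension formulas
\[
\dim S^3 T_p^* = \binom{n+2}{3} = \frac{n(n+1)(n+2)}{6}, \qquad \dim \mathcal{R} = \frac{n^2(n^2-1)}{12}.
\]
The second is classical; one convenient derivation splits $S^2(\Lambda^2 T_p^*) = \mathcal{R} \oplus \Lambda^4 T_p^*$ via the first Bianchi identity and computes $\tfrac{1}{2}\binom{n}{2}\bigl(\binom{n}{2}+1\bigr) - \binom{n}{4}$. Subtracting and factoring gives
\[
\dim \mathcal{R} - \dim S^3 T_p^* = \frac{n(n+1)^2(n-4)}{12},
\]
which is strictly positive precisely when $n>4$. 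This arithmetic is essentially the only non-trivial step; it also explains why the corollary excludes $n=4$, since there the two dimensions agree.

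To finish, I would invoke the elementary fact that the image of a polynomial map $\R^m \to \R^N$ is a semi-algebraic set of dimension at most $m$, so $\dim \rho(S^3 T_p^*) \leq \dim S^3 T_p^* < \dim \mathcal{R}$ and $\rho(S^3 T_p^*)$ is a proper subset of $\mathcal{R}$. Since every element of $\mathcal{R}$ can be realised as the Riemann tensor at $p$ of some smooth metric (for instance by prescribing the quadratic part of $g_{ij}$ in normal coordinates), there are metrics whose pointwise curvature fails to lie in the image of $\rho$. By the proposition, any such metric is not Hessian, so the condition is a genuine obstruction.
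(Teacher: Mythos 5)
Your proposal is correct and follows essentially the same route as the paper: compare $\dim S^3 T_p^* = \tfrac{1}{6}n(n+1)(n+2)$ with $\dim \mathcal{R} = \tfrac{1}{12}n^2(n^2-1)$ and observe that the difference $\tfrac{1}{12}n(n+1)^2(n-4)$ is positive exactly when $n>4$. The extra remarks you add (the image of a polynomial map has dimension at most that of its source, and every algebraic curvature tensor is realised pointwise by some metric) are left implicit in the paper but are correct and make the argument more complete.
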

\begin{proof}
$\dim S^3 T = \binom{ n+2 }{n-1} = \frac{1}{6}n(1+n)(2+n)$. The dimension of the space of algebraic curvature tensors, ${\cal R}$, is $\dim {\cal R}=\frac{1}{12}n^2(n^2-1)$. So ${\dim \cal R} - \dim S^3 T = \frac{1}{12}n(n-4)(1+n)^2$. This is strictly positive if $n>4$.
\end{proof}

Rather more surprisingly, the condition that $R$ lies in the image of $\rho$ gives a non-trivial condition in dimension $4$. In dimension $4$, $\dim S^3 T = \dim {\cal R} = 20$ and yet we will see that the dimension of the image of $\rho$ is only $18$. This can be tested heuristically by computer experiment: pick a ``random'' tensor $A \in S^3 T^*$ and compute the rank of the derivative $\rho_*$ at $A$. The result is almost certain to be $18$, giving strong evidence for the claim.

To prove the claim rigorously we wish to write the curvature conditions in dimension $4$ in a more explicit form.

One possible approach to finding explicit curvature conditions is to use Gr\"obner bases. Let $\rho^\C: S^3 T^* \ot \C \longrightarrow \Lambda^2 \ot \Lambda^2 \ot \C$ be the complexification of the map $\rho$. One can think of the image of $\rho^\C$ as being a complex algebraic variety parameterized by $\rho^\C$. What we seek is a set of algebraic equations on the curvature that define this variety. In theory, therefore, one can then use the well known ``implicitization'' algorithm described in \cite{cox2007ideals} to find implicit equations for the image of $\rho^\C$. These implicit equations would be precisely the curvature conditions we seek. While this approach proves the existence of the desired curvature conditions, it does not perform sufficiently well in practice. For example we have attempted to implement this brute force strategy using Singular (together with some more or less obvious modifications to improve the efficiency) without success. 

An alternative strategy is to use the $\SO(n)$ equivariance of the problem to enumerate all the possible equivariant conditions on a given power of the curvature tensor and search, by brute force, amongst these conditions. Let us describe this approach in more detail.

Let ${\cal R}$ denote the space of algebraic curvature tensors. The $p$-th power of the curvature must lie in $S^p{\cal R}$. So an equivariant $p$-th order condition on the curvature must be given by an equivariant linear map $\phi: S^p({\cal R}) \longrightarrow V$ for some representation $V$ of $\SO(n)$. We search for these conditions systematically. For each $p$, we decompose $S^p(\cal R)$ into irreducible components under $\SO(n)$. For each irreducible representation $V$, let $m$ be the multiplicity with which it occurs in the decomposition into irreducibles. We can correspondingly define $m$ independent maps $\phi_1, \phi_2, \ldots \phi_m$ from $S^p(\cal R)$ to $V$. Pick a finite sequence of random tensors $\{t_i\} \in S^3 T^*$ so that the set of vectors $\{ \phi_1( \rho(t_i) ) \op \phi_2( \rho( t_i) ) \op \ldots \op \phi_m( \rho(t_i)) \}$ has the maximum possible rank: more precisely, keep adding random vectors to the sequence $\{t_i\}$ until the rank stops increasing. Write $r_i=\rho(t_i) \in {\cal R}$. Add algebraic curvature tensors $r_j$ to the finite sequence $\{ r_i \}$ so that the set of vectors $\{ \phi_1( r_j) \op \phi_2( r_j) \op \ldots \op \phi_m( r_j) \}$ has maximal rank.
It is now a simple matter of linear algebra to find the linear conditions satisfied by the terms $\{\phi_a( \rho(t_i) )\}$ that are not satisfied by the $\{ \phi_a( r_j ) \}$.

Because of the use of random generation of tensors, in theory, one cannot be certain that a curvature ``identity'' discovered in this way holds in general or that all curvature identities will be found by this approach. However, once one has identified a candidate curvature identity, it is easy to verify it by brute force symbolic algebra. It is also easy to tell one has identified all the curvature identities implied by the condition $R \in \rho^\C$ by dimension counting.

While, this brute force algorithm is not particularly pretty it is effective --- at least in dimension $4$. Since $\hbox{Spin}(4) = \SU(2) \times \SU(2)$ the representation theory of $\SO(4)$ is quite simple, so the above strategy is not hard to implement. We have not attempted to implement the strategy in higher dimensions.

Here is the result.

\begin{theorem}The space of possible curvature tensors for
a Hessian $4$-mani\-fold is $18$ dimensional. In particular the
curvature tensor must satisfy the identities:
\begin{equation}
\label{eqn:pontryagin1}
\alpha( R\indices{_{ija}^b}R\indices{_{klb}^a} ) = 0
\end{equation}
\begin{equation}
\label{eqn:cubic}
\alpha( R\indices{_{iajb}}R\indices{_{k}^{b}_{cd}}R\indices{_{l}^{dac}} - 2R\indices{_{iajb}}R\indices{_{kc}^{a}_{d}}R\indices{_{l}^{dbc}} ) = 0 
\end{equation}
where $\alpha$ denotes antisymmetrization of the $i$, $j$, $k$ and $l$ indices.
\end{theorem}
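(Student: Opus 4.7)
The plan is to work pointwise: the preceding proposition identifies the space of curvature tensors of Hessian $4$-manifolds with the image of the quadratic equivariant map $\rho: S^3 T^* \to \mathcal{R}$, where $\mathcal{R}$ is the $20$-dimensional space of algebraic curvature tensors. In an orthonormal frame I would encode $A \in S^3 T^*$ by the four symmetric $4 \times 4$ matrices $\mathcal{A}_i$ defined by $(\mathcal{A}_i)_{jk} = A_{ijk}$. A short calculation using the full symmetry of $A$ shows that \eqref{eqn:curvatureFromS3} rewrites compactly as $R_{ijkl} = -[\mathcal{A}_i,\mathcal{A}_j]_{kl}$, reducing everything to an algebraic statement about commutators of four symmetric matrices.

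For identity \eqref{eqn:pontryagin1}, I would note that $R_{ija}{}^b R_{klb}{}^a = \operatorname{tr}(R_{ij} R_{kl}) = \operatorname{tr}([\mathcal{A}_i,\mathcal{A}_j][\mathcal{A}_k,\mathcal{A}_l])$. Because each $\mathcal{A}_i$ is symmetric, $\operatorname{tr}(\mathcal{A}_{i_1} \cdots \mathcal{A}_{i_n})$ is invariant under reversal of the factors; combined with cyclic invariance of the trace, this collapses the $24$ orderings of $(i,j,k,l)$ to three equivalence classes under the dihedral action on a $4$-cycle. Expanding the two commutators and totally antisymmetrizing in $i, j, k, l$ yields a linear combination of these three classes which cancels term-by-term. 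Note that this argument is dimension-independent, matching the paper's claim that the first Pontryagin form vanishes on any Hessian manifold.

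For identity \eqref{eqn:cubic} and the dimension count, I would substitute $R_{ijkl} = -[\mathcal{A}_i,\mathcal{A}_j]_{kl}$ into each of the three factors, collect the resulting traces of products of six symmetric matrices, and verify that antisymmetrization in $i, j, k, l$ annihilates the total. Because the expansion is unwieldy, this step is best handled by a symbolic algebra system. For the dimension itself I would pick a random numerical $A_0 \in S^3 T^*$ and compute the rank of the linearization $d\rho_{A_0}(B)_{ijkl} = -[\mathcal{B}_i,\mathcal{A}_j]_{kl} - [\mathcal{A}_i,\mathcal{B}_j]_{kl}$, verifying it equals $18$; this gives the lower bound. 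The upper bound of $18$ then follows from the two identities, provided one also checks that their gradients at $\rho(A_0)$ are linearly independent, so the two conditions cut out a subvariety of codimension exactly $2$.

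The hard part will be identity \eqref{eqn:cubic}: unlike \eqref{eqn:pontryagin1} it is not a classical characteristic form and has no transparent conceptual interpretation. It was discovered only through the $\SO(n)$-equivariant brute-force search described earlier in the section, and a more conceptual derivation would require decomposing $S^3(\mathcal{R})$ under $\mathrm{Spin}(4) = \SU(2) \times \SU(2)$ and tracking precisely which isotypic components of $\mathcal{R}$, $\mathcal{R} \otimes \mathcal{R}$, and $\mathcal{R} \otimes \mathcal{R} \otimes \mathcal{R}$ can lie in the image of $\rho$.
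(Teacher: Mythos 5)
Your proposal is correct and, at its core, takes the same route as the paper: the paper's proof of this theorem is exactly the brute-force symbolic verification you defer to for identity \eqref{eqn:cubic} and for the rank computation (expand a general $A\in S^3T^*$ in its $20$ components, form $R=\rho(A)$ via \eqref{eqn:curvatureFromS3}, and check the identities and the rank of $\rho_*$ directly). The one place you genuinely add something is the hand proof of \eqref{eqn:pontryagin1}: writing $R_{ijkl}=-[\mathcal{A}_i,\mathcal{A}_j]_{kl}$ and reducing $\alpha(\operatorname{tr}([\mathcal{A}_i,\mathcal{A}_j][\mathcal{A}_k,\mathcal{A}_l]))$ to $4\,\alpha(\operatorname{tr}(\mathcal{A}_i\mathcal{A}_j\mathcal{A}_k\mathcal{A}_l))$, which dies because the $4$-cycle is an odd permutation --- this is precisely the content of the paper's subsequent graphical proof that all Pontryagin forms vanish, translated from the `H'-graph notation into matrix language, and your observation that it is dimension-independent matches the paper's generalization. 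Your treatment of the dimension count (exact-arithmetic rank of $d\rho$ at a rational point for the lower bound, independence of the two gradients for the upper bound) is in fact more careful than the paper's, which only gestures at the rank computation as a heuristic; no gaps there.
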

\begin{proof}
Using a symbolic algebra package, write the general tensor in $S^3 T^*$ with respect to an orthonormal basis in terms of its $20$ components. Compute the curvature tensor using equation \eqref{eqn:curvatureFromS3}. One can then directly check the above identities.
\end{proof}
In fact, equation \eqref{eqn:pontryagin1} is simple to prove by hand. Moreover, it generalizes to higher dimensions. To make the proof as vivid as possible, we introduce a graphical notation that simplifies manipulating symmetric powers of the $S^3$-tensor $A$ (this is based on the notation given in the appendix of \cite{penroseAndRindler}). When using this notation we will always assume that our coordinates are orthonormal at the point where we perform the calculations so we can ignore the difference between upper and lower indices of ordinary tensor notation.

Given a tensor defined by taking the $n$-th tensor power of the $S^3$-tensor tensor $A$ followed by a number of contractions we can define an associated graph by:
\begin{itemize}
\item Adding one vertex to the graph for each occurrence of $A$;
\item Adding an edge connecting the vertices for each contraction between the vertices;
\item Adding a vertex for each tensor index that is not contracted and labelling it with the same symbol used for the index. Join this vertex to the vertex representing the associated occurrence of $A$.
\end{itemize}

When two tensors written in the Einstein summation convention are juxtaposed in a formula, we will refer to this as ``multiplying'' the tensors. This multiplication corresponds graphically to connecting labelled vertices of the graphs according to the contractions that need to be performed when the tensors are juxtaposed. Since this multiplication is commutative, and since the $S^3$-tensor is symmetric, one sees that there is a one to one correspondence between isomorphism classes of such graphs and equivalently defined tensors.

We can use these graphs in formulae as an alternative notation for the tensor represented by the graph. For example, we can write the curvature identity \eqref{eqn:curvatureFromS3} graphically as
\begin{equation}
\label{graphicalCurvatureIdentity}
R_{ijkl} =
-
\scriptsize{
\raisebox{-0.5\height}{
\begin{tikzpicture}[scale=0.5]
\node (i) at (0,2) {$i$} ;
\node (j) at (2,2) {$j$};
\node (k) at (0,0) {$k$};
\node (l) at (2,0) {$l$};
\coordinate (p1) at (0,1) {};
\coordinate (p2) at (2,1) {};
\draw [-] (i) -- (p1);
\draw [-] (k) -- (p1);
\draw [-] (j) -- (p2);
\draw [-] (l) -- (p2);
\draw [-] (p1) -- (p2);
\end{tikzpicture}
}
+
\raisebox{-0.5\height}{
\begin{tikzpicture}[scale=0.5]
\node (i) at (0,2) {$i$} ;
\node (j) at (2,2) {$j$};
\node (k) at (0,0) {$k$};
\node (l) at (2,0) {$l$};
\coordinate (p1) at (0.7,0.7) {};
\coordinate (p2) at (1.3,0.7) {};
\draw [-] (i) -- (p2);
\draw [-] (l) -- (p2);
\draw [-][draw=white,double=black,very thick] (j) -- (p1);
\draw [-] (k) -- (p1);
\draw [-] (p1) -- (p2) ;
\end{tikzpicture}
}
}
.
\end{equation}
As an extended example, here is a graphical proof that the first Bianchi identity follows from equation \eqref{eqn:curvatureFromS3}:
\begin{multline*}
R_{ijkl} + R_{jkil} + R_{kijl} \\
\begin{aligned}
%%%%%%%%%%%%%%%%%%%%%%%%%%%%%%%%%%%%%%%%%%%%%
% 
%  FIRST LINE OF BIANCHI IDENTITY EQUATION
%
%%%%%%%%%%%%%%%%%%%%%%%%%%%%%%%%%%%%%%%%%%%%%
& =
-
\scriptsize{
\raisebox{-0.5\height}{  % ****************** R_{ijkl}
\begin{tikzpicture}[scale=0.5]
\node (i) at (0,2) {$i$} ;
\node (j) at (2,2) {$j$};
\node (k) at (0,0) {$k$};
\node (l) at (2,0) {$l$};
\coordinate (a) at (1,1) {};
\coordinate (p1) at (0,1) {};
\coordinate (p2) at (2,1) {};
\draw [-] (i) -- (p1);
\draw [-] (k) -- (p1);
\draw [-] (j) -- (p2);
\draw [-] (l) -- (p2);
\draw [-] (p1) -- (a);
\draw [-] (p2) -- (a);
\end{tikzpicture}
}
+
\raisebox{-0.5\height}{
\begin{tikzpicture}[scale=0.5]
\node (i) at (0,2) {$i$} ;
\node (j) at (2,2) {$j$};
\node (k) at (0,0) {$k$};
\node (l) at (2,0) {$l$};
\coordinate (a) at (1,0.7) {};
\coordinate (p1) at (0.7,0.7) {};
\coordinate (p2) at (1.3,0.7) {};
\draw [-] (i) -- (p2);
\draw [-] (l) -- (p2);
\draw [-][draw=white,double=black,very thick] (j) -- (p1);
\draw [-] (k) -- (p1);
\draw [-] (p1) -- (a);
\draw [-] (p2) -- (a);
\end{tikzpicture}
}
-
\raisebox{-0.5\height}{ % ****************** R_{jkil}
\begin{tikzpicture}[scale=0.5]
\node (i) at (0,2) {$j$} ;
\node (j) at (2,2) {$k$};
\node (k) at (0,0) {$i$};
\node (l) at (2,0) {$l$};
\coordinate (a) at (1,1) {};
\coordinate (p1) at (0,1) {};
\coordinate (p2) at (2,1) {};
\draw [-] (i) -- (p1);
\draw [-] (k) -- (p1);
\draw [-] (j) -- (p2);
\draw [-] (l) -- (p2);
\draw [-] (p1) -- (a);
\draw [-] (p2) -- (a);
\end{tikzpicture}
}
+
\raisebox{-0.5\height}{
\begin{tikzpicture}[scale=0.5]
\node (i) at (0,2) {$j$} ;
\node (j) at (2,2) {$k$};
\node (k) at (0,0) {$i$};
\node (l) at (2,0) {$l$};
\coordinate (a) at (1,0.7) {};
\coordinate (p1) at (0.7,0.7) {};
\coordinate (p2) at (1.3,0.7) {};
\draw [-] (i) -- (p2);
\draw [-] (l) -- (p2);
\draw [-][draw=white,double=black,very thick] (j) -- (p1);
\draw [-] (k) -- (p1);
\draw [-] (p1) -- (a);
\draw [-] (p2) -- (a);
\end{tikzpicture}
}
-
\raisebox{-0.5\height}{ % ****************** R_{kijl}
\begin{tikzpicture}[scale=0.5]
\node (i) at (0,2) {$k$} ;
\node (j) at (2,2) {$i$};
\node (k) at (0,0) {$j$};
\node (l) at (2,0) {$l$};
\coordinate (a) at (1,1) {};
\coordinate (p1) at (0,1) {};
\coordinate (p2) at (2,1) {};
\draw [-] (i) -- (p1);
\draw [-] (k) -- (p1);
\draw [-] (j) -- (p2);
\draw [-] (l) -- (p2);
\draw [-] (p1) -- (a);
\draw [-] (p2) -- (a);
\end{tikzpicture}
}
+
\raisebox{-0.5\height}{
\begin{tikzpicture}[scale=0.5]
\node (i) at (0,2) {$k$} ;
\node (j) at (2,2) {$i$};
\node (k) at (0,0) {$j$};
\node (l) at (2,0) {$l$};
\coordinate (a) at (1,0.7) {};
\coordinate (p1) at (0.7,0.7) {};
\coordinate (p2) at (1.3,0.7) {};
\draw [-] (i) -- (p2);
\draw [-] (l) -- (p2);
\draw [-][draw=white,double=black,very thick] (j) -- (p1);
\draw [-] (k) -- (p1);
\draw [-] (p1) -- (a);
\draw [-] (p2) -- (a);
\end{tikzpicture}
} 
} \\
%%%%%%%%%%%%%%%%%%%%%%%%%%%%%%%%%%%%%%%%%%%%%%%%%%%%%%%%%
%
%   Rearrange the terms
%
%%%%%%%%%%%%%%%%%%%%%%%%%%%%%%%%%%%%%%%%%%%%%%%%%%%%%%%%%
& =
-
\scriptsize{
\raisebox{-0.5\height}{  % ****************** R_{ijkl}
\begin{tikzpicture}[scale=0.5]
\node (i) at (0,2) {$i$} ;
\node (j) at (2,2) {$j$};
\node (k) at (0,0) {$k$};
\node (l) at (2,0) {$l$};
\coordinate (a) at (1,1) {};
\coordinate (p1) at (0,1) {};
\coordinate (p2) at (2,1) {};
\draw [-] (i) -- (p1);
\draw [-] (k) -- (p1);
\draw [-] (j) -- (p2);
\draw [-] (l) -- (p2);
\draw [-] (p1) -- (a);
\draw [-] (p2) -- (a);
\end{tikzpicture}
}
+
\raisebox{-0.5\height}{
\begin{tikzpicture}[scale=0.5]
\node (i) at (0,2) {$i$} ;
\node (j) at (2,2) {$j$};
\node (k) at (0,0) {$k$};
\node (l) at (2,0) {$l$};
\coordinate (a) at (1,0.7) {};
\coordinate (p1) at (0.7,0.7) {};
\coordinate (p2) at (1.3,0.7) {};
\draw [-] (i) -- (p2);
\draw [-] (l) -- (p2);
\draw [-][draw=white,double=black,very thick] (j) -- (p1);
\draw [-] (k) -- (p1);
\draw [-] (p1) -- (a);
\draw [-] (p2) -- (a);
\end{tikzpicture}
}
-
\raisebox{-0.5\height}{  % ****************** R_{kijl}
\begin{tikzpicture}[scale=0.5]
\node (i) at (0,2) {$i$} ;
\node (j) at (2,2) {$j$};
\node (k) at (0,0) {$k$};
\node (l) at (2,0) {$l$};
\coordinate (a) at (1,1) {};
\coordinate (p1) at (1,2.0) {};
\coordinate (p2) at (1,0.0) {};
\draw [-] (i) -- (p1);
\draw [-] (k) -- (p2);
\draw [-] (j) -- (p1);
\draw [-] (l) -- (p2);
\draw [-] (p1) -- (a);
\draw [-] (p2) -- (a);
\end{tikzpicture}
}
+
\raisebox{-0.5\height}{
\begin{tikzpicture}[scale=0.5]
\node (i) at (0,2) {$i$} ;
\node (j) at (2,2) {$j$};
\node (k) at (0,0) {$k$};
\node (l) at (2,0) {$l$};
\coordinate (a) at (1,1) {};
\coordinate (p1) at (0,1) {};
\coordinate (p2) at (2,1) {};
\draw [-] (i) -- (p1);
\draw [-] (k) -- (p1);
\draw [-] (j) -- (p2);
\draw [-] (l) -- (p2);
\draw [-] (p1) -- (a);
\draw [-] (p2) -- (a);
\end{tikzpicture}
}
-
\raisebox{-0.5\height}{   %%%%%%%%%%%%%%% R_{kijl}
\begin{tikzpicture}[scale=0.5]
\node (i) at (0,2) {$i$} ;
\node (j) at (2,2) {$j$};
\node (k) at (0,0) {$k$};
\node (l) at (2,0) {$l$};
\coordinate (a) at (1,0.7) {};
\coordinate (p1) at (0.7,0.7) {};
\coordinate (p2) at (1.3,0.7) {};
\draw [-] (i) -- (p2);
\draw [-] (l) -- (p2);
\draw [-][draw=white,double=black,very thick] (j) -- (p1);
\draw [-] (k) -- (p1);
\draw [-] (p1) -- (a);
\draw [-] (p2) -- (a);
\end{tikzpicture}
}
+
\raisebox{-0.5\height}{
\begin{tikzpicture}[scale=0.5]
\node (i) at (0,2) {$i$} ;
\node (j) at (2,2) {$j$};
\node (k) at (0,0) {$k$};
\node (l) at (2,0) {$l$};
\coordinate (a) at (1,1) {};
\coordinate (p1) at (1,2.0) {};
\coordinate (p2) at (1,0.0) {};
\draw [-] (i) -- (p1);
\draw [-] (k) -- (p2);
\draw [-] (j) -- (p1);
\draw [-] (l) -- (p2);
\draw [-] (p1) -- (a);
\draw [-] (p2) -- (a);
\end{tikzpicture}
}
} \\
%%%%%%%%%%%%%%
& = 0.
\end{aligned}
\end{multline*}
The last line follows because the H's, I's and X's cancel. 

With this notation understood, let us now prove a generalization of equation \eqref{eqn:pontryagin1}.

\begin{theorem}
The Pontryagin forms (i.e.\ the closed forms given as polynomials in the curvature tensor that represent the Pontryagin classes) vanish on a Hessian manifold.
\end{theorem}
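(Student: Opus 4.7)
The plan is to recast the Riemannian curvature of a Hessian metric as the wedge-square of a symmetric-matrix-valued 1-form built from the $S^3$-tensor, and then deduce that the traces of all even powers of this square vanish by a cyclic-versus-antisymmetric clash.

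The first step is to rewrite \eqref{eqn:curvatureFromS3} as a commutator. At any point, working in coordinates that are orthonormal there, I would introduce for each index $i$ the symmetric matrix $B_i$ with entries $(B_i)_{kl}:=A_{ikl}$; using the total symmetry of $A$, \eqref{eqn:curvatureFromS3} rearranges to $R_{ijkl}=-[B_i,B_j]_{kl}$, so the curvature endomorphism $R_{ij}$ is exactly the commutator $-[B_i,B_j]$. Packaging this into the matrix-valued 1-form $\beta:=B_i\,dx^i$, which takes values in symmetric matrices, the curvature 2-form $\Omega:=\tfrac12 R_{ij}\,dx^i\wedge dx^j$ factors in the graded algebra of matrix-valued forms as
\[
\Omega \;=\; -\,\beta\wedge\beta.
\]

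By Chern--Weil theory every Pontryagin form is a polynomial in the basic traces $\tau_k:=\operatorname{tr}(\Omega^{\wedge 2k})$ for $k\geq 1$, so it suffices to show that each $\tau_k$ vanishes identically. Substituting $\Omega=-\beta\wedge\beta$ gives $\tau_k=\operatorname{tr}(\beta^{\wedge 4k})$. Expanding,
\[
\operatorname{tr}(\beta^{\wedge n}) \;=\; \operatorname{tr}(B_{i_1}\cdots B_{i_n})\,dx^{i_1}\wedge\cdots\wedge dx^{i_n}.
\]
The scalar coefficient is invariant under cyclic permutation of $(i_1,\ldots,i_n)$, while a cyclic shift of $n$ 1-forms picks up the sign of an $n$-cycle, namely $(-1)^{n-1}$. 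Relabelling dummy indices therefore yields $\operatorname{tr}(\beta^{\wedge n})=(-1)^{n-1}\operatorname{tr}(\beta^{\wedge n})$, which forces the form to vanish whenever $n$ is even. Applied with $n=4k$ this gives $\tau_k\equiv 0$, and hence the vanishing of every Pontryagin form.

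The main obstacle is conceptual rather than computational: one must recognise \eqref{eqn:curvatureFromS3} as a matrix commutator, which uses the full $S^3$-symmetry of $A$, and notice that it is the \emph{symmetry} of each $B_i$ (not antisymmetry) that allows $\Omega$ to be written as the single wedge square $-\beta\wedge\beta$. Once that factorisation is secured the argument becomes formal, and no further curvature identities --- in particular nothing analogous to the cubic identity \eqref{eqn:cubic} --- are needed.
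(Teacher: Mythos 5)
Your proof is correct and is essentially the paper's argument in different clothing: the factorisation $\Omega=-\beta\wedge\beta$ coming from $R_{ij}=-[B_i,B_j]$ is exactly the curvature identity \eqref{eqn:curvatureFromS3}, and your cyclic-trace-versus-odd-cycle cancellation is precisely the paper's observation that the $2p$-cycle permuting the legs of its ring of H-graphs is odd. The matrix-valued-form packaging is a clean equivalent of the paper's graphical notation, but the underlying mechanism is identical.
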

\begin{proof}
We need to briefly recall the theory of characteristic classes in order give a more formal definition of the Pontryagin forms. See \cite{milnorandstasheff} for a less brisk account.

An invariant polynomial $P$ defined on $n \times n$ complex matrices is a polynomial in the coefficients of the matrix that satisfies $P(TXT^{-1})=P(X)$ for every non singular matrix $T$. Given a complex $k$ dimensional vector bundle $V$ with a local trivialization $s^1, s^2, \ldots s^k$ and a connection $\D$ we can write the curvature of $\D$ as $F s^i = \Omega_{ij} s^j$ where each $\Omega_{ij}$ is a complex valued two form. Since the algebra of even degree forms is commutative, it makes sense to evaluate the polynomial $P$ on $\Omega$. The result is a form of degree $2k$ which we will denote as $P(F)$. The requirement that $P$ is an invariant polynomial ensures that this definition is independent of the choice of trivialization. The key results in the theory of characteristic classes are that P(F) is closed and that the de Rham cohomology class of P(F) is independent of the choice of connection. We will refer to this cohomology class as the characteristic class associated with P.

Let $P_k$ denote the invariant polynomial given by the
coefficient of $t^k$ in the expansion of $\det(I + tA)$.
If we understand the eigenvalues of a matrix to mean the diagonal entries of a conjugate matrix in Jordan normal form, then the polynomial $P_k$ is associated with the $k$-th elementary symmetric function in the eigenvalues. 

The associated characteristic class is related to the $k$-th Chern class $c_k$ of the bundle $V$ by:
\[ [P_k(F)] = (2 \pi i)^k c_k(V). \]
For our purposes we could take this as the definition of the Chern class. 

The Pontryagin classes, $p_k$, of a real vector bundle $E$ are defined in terms of the Chern classes of the complexificiation:
\[ p_k(E) = (-1)^k c_{2k}( E \ot \C). \]
The Pontryagin classes of a manifold are the Pontryagin classes of
the complexified tangent space.

This completes our review of the theory of characteristic classes.
We can now define the {\em Pontryagin forms} of a Riemannian manifold $(M,g)$ to be the forms $P_{\sigma_{2k}}(R)$ where $R$ is the Levi--Civita connection.

Let $X\indices{_a^b}$ be any matrix. We define an invariant polynomial $Q_k$ by $k$ contractions:
\begin{eqnarray*}
Q_k(X) & = & X\indices{_{a_1}^{a_2}} X\indices{_{a_2}^{a_3}} X\indices{_{a_3}^{a_4}} \ldots X\indices{_{a_{k-1}}^{a_k}} X\indices{_{a_{k}}^{a_1}} \\ 
& = & \lambda_1^k + \lambda_2^k + \ldots + \lambda_n^k
\end{eqnarray*}
where the $\lambda_i$ are the eigenvalues of $X$.

The general theory of symmetric polynomials shows that any symmetric polynomial can be written as as sum and product of the $Q_k$. Thus all the Chern forms, $P_k(F)$, are generated by sums and products of the forms $Q_k(F)$.

Thus to prove our result we need to show that the tensor
\begin{multline*}
Q^p_{i_1 i_2 \ldots i_{2p}} = \\
\sum_{\sigma \in S_{2p}} \sgn( \sigma )
R\indices{_{i_{\sigma(1)}{i_{\sigma(2)}}{a_1}}^{a_2}}
R\indices{_{i_{\sigma(3)}{i_{\sigma(4)}}{a_2}}^{a_3}}
R\indices{_{i_{\sigma(5)}{i_{\sigma(6)}}{a_3}}^{a_4}}
\ldots
R\indices{_{i_{\sigma(2p-1)}{i_{\sigma(2p)}}{a_p}}^{a_1}}
\end{multline*}
vanishes on a Hessian manifold.

We can rewrite the curvature identity \eqref{eqn:curvatureFromS3} as:
\[ R_{i_1i_2ab} = \sum_{\sigma \in S_{2}} -\sgn( \sigma )
\scriptsize{
\raisebox{-0.5\height}{
\begin{tikzpicture}[scale=0.5]
\node (i) at (0,2) {$i_{\sigma(1)}$} ;
\node (j) at (2,2) {$i_{\sigma(2)}$};
\node (k) at (0,0) {$a$};
\node (l) at (2,0) {$b$};
\coordinate (p1) at (0,1) {};
\coordinate (p2) at (2,1) {};
\draw [-] (i) -- (p1);
\draw [-] (k) -- (p1);
\draw [-] (j) -- (p2);
\draw [-] (l) -- (p2);
\draw [-] (p1) -- (p2);
\end{tikzpicture}
}
}.
\]
Thus we can replace each $R$ in the formula for $Q^p$ with an `H'. The legs of adjacent H's are then connected. The result is:
\begin{multline*}
Q^p_{i_1 i_2 \ldots i_{2p}} = \\
(-1)^p \sum_{\sigma \in S_{2p}}
\sgn(\sigma)
\scriptsize{
\raisebox{-0.5\height}{
\begin{tikzpicture}[scale=0.5]
\node (i1) at (0,2) {$i_{\sigma(1)}$};
\coordinate (p1) at (0,1) {};
\node (i2) at (2,2) {$i_{\sigma(2)}$};
\coordinate (p2) at (2,1) {};
\node (i3) at (4,2) {$i_{\sigma(3)}$};
\coordinate (p3) at (4,1) {};
\node (i4) at (6,2) {$i_{\sigma(4)}$};
\coordinate (p4) at (6,1) {};
\node (i5) at (8,2) {$i_{\sigma(5)}$};
\coordinate (p5) at (8,1) {};
\node (i6) at (10,2) {$i_{\sigma(6)}$};
\coordinate (p6) at (10,1) {};
\node (p7) at (12,1) {\ldots};
\node (i2nm1) at (14,2) {$i_{\sigma(2p-1)}$};
\coordinate (p2nm1) at (14,1) {};
\node (i2n) at (16,2) {$i_{\sigma(2p)}$};
\coordinate (p2n) at (16,1) {};
\draw [-] (i1) -- (p1);
\draw [-] (i2) -- (p2);
\draw [-] (i3) -- (p3);
\draw [-] (i4) -- (p4);
\draw [-] (i5) -- (p5);
\draw [-] (i6) -- (p6);
\draw [-] (i2nm1) -- (p2nm1);
\draw [-] (i2n) -- (p2n);
\draw [-] (p1) -- (p2);
\draw [-] (p2) -- (p3);
\draw [-] (p3) -- (p4);
\draw [-] (p4) -- (p5);
\draw [-] (p5) -- (p6);
\draw [-] (p6) -- (p7);
\draw [-] (p7) -- (p2nm1);
\draw [-] (p2nm1) -- (p2n);
\draw [-] (0,1) arc (-180:-90:1) -- (15,0) arc (-90:0:1);
\end{tikzpicture}
}
}.
\end{multline*}
Since the cycle $1 \rightarrow 2 \rightarrow 3 \ldots \rightarrow 2p \rightarrow 1$ is an odd permutation, one sees that $Q^p=0$.
\end{proof}

The Pontryagin classes, therefore, give a topological obstruction to the existence of a Hessian metric in dimensions $\geq 4$. Note that it is possible to manufacture topologically interesting manifolds that admit Hessian metrics. This is true because: all analytic $2$-manifolds are Hessian (as we will prove later); all products of Hessian manifolds are Hessian manifolds; all hyperbolic manifolds are Hessian.

The graphical notation is not essential to proving the above result. Nevertheless we find it illuminates the proof. We note that similar graphs have been used to good effect in papers such as \cite{hitchin2001curvature} and \cite{garoufalidis1998some}.

While equation \eqref{eqn:pontryagin1} generalizes easily to higher dimensions, equation \eqref{eqn:cubic} does not hold in dimensions $\geq 5$. To see this, simply pick a random tensor in $S^3 T^*$ for $n=5$ and numerically check whether \eqref{eqn:cubic} holds: in all probability it will not. Since the equation does not hold in higher dimensions, the graphical proof technique cannot work in this case.

Note that we do not claim that equations \eqref{eqn:pontryagin1} and \eqref{eqn:cubic} provide a sufficient condition for a $4$-d curvature tensor to be the curvature tensor of some Hessian manifold. We have only considered the complex algebraic map $\rho^\C$. In real algebraic geometry, one cannot expect to translate a parametric representation of a variety into a set of implicit equations, one can only expect to find a set of implicit inequalities. Algorithms do exist \cite{basu} to solve such problems but they are not particularly efficient.

Although we cannot prove our conditions are sufficient in dimension $4$, we can at least prove the corresponding result in dimension $3$.

\begin{theorem}In $3$ dimensions, all possible Riemann curvature
tensors occur as the curvature tensor of a Hessian metric.
\end{theorem}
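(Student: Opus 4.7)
The plan is to show the quadratic equivariant map $\rho\colon S^3 T^* \to \mathcal{R}$ is surjective in dimension $3$. Once this is established, realising a specified algebraic curvature tensor $R_0$ as the curvature of an actual Hessian metric is immediate: pick any $A_0\in S^3 T^*$ with $\rho(A_0)=R_0$ and any positive-definite symmetric matrix $g_0$, and take the local potential $\phi(x) = \tfrac12 g_{0,ij}x^ix^j - \tfrac13 A_{0,ijk}x^ix^jx^k$. Using the relation $A_{ijk} = -\tfrac12\phi_{,ijk}$ between the $S^3$-tensor of the induced dually flat structure and $\phi$, the Hessian metric of $\phi$ realises $g_0$ and $A_0$ at the origin, so by the proposition recalled at the start of Section~\ref{section:curvature} its curvature at the origin is $\rho(A_0)=R_0$.

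The first reduction uses the vanishing of the Weyl tensor in dimension $3$: every algebraic curvature tensor is uniquely determined by its Ricci tensor via $R_{ijkl} = g_{ik}S_{jl}-g_{il}S_{jk}+g_{jl}S_{ik}-g_{jk}S_{il}$, where $S$ is a symmetric $(0,2)$-tensor linearly recoverable from Ricci. Hence surjectivity of $\rho$ is equivalent to surjectivity of the induced quadratic map $f\colon S^3T^*\to \mathrm{Sym}^2 T^*$, $f(A)=\mathrm{Ric}(\rho(A))$, which in an orthonormal frame reads
\[
f(A)_{jl} = \sum_{i,a} A_{ija}A_{ila} - \sum_a T_a A_{jla}, \qquad T_a = \sum_j A_{jja}.
\]
This is a quadratic map from a $10$-dimensional to a $6$-dimensional space, so on dimensional grounds surjectivity is plausible.

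I would attack surjectivity of $f$ in two steps. First, pick a specific $A_0$ (a convenient choice: $A_{0,123}=1$ with all other independent components zero, for which $T(A_0)=0$ and $f(A_0)=2g$) and verify by direct calculation that the differential $df|_{A_0}$ has full rank $6$; this shows the image of $f$ is a $6$-dimensional semialgebraic set containing an open neighbourhood of $f(A_0)$. Second, using $O(3)$-equivariance of $f$ one reduces to realising every diagonal target $\mathrm{diag}(r_1,r_2,r_3)$, and then substitutes a multi-parameter ansatz---for instance, mixing the $A_{123}$ term (which alone gives Ricci $2\gamma^2 g$, always positive) with the trace-vector terms $A_{ijk}=u_i\delta_{jk}+u_j\delta_{ik}+u_k\delta_{ij}$ (which alone give Ricci $-u\otimes u - 3|u|^2 g$, always negative) plus further asymmetric combinations involving the $A_{iij}$'s---into the explicit formula for $f$ and solves the resulting polynomial system. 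The main obstacle is this second step: because $f$ is quadratic rather than linear, openness of the image does not automatically imply global surjectivity, and one must explicitly verify that every sign pattern of the eigenvalues $(r_1,r_2,r_3)$ is attained. This case analysis, ultimately a combinatorial check on the interplay between the scalar- and vector-valued pieces of $S^3T^*$, is the crux of the argument and is cleanly carried out with computer algebra.
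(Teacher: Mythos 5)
Your overall strategy coincides with the paper's: reduce to surjectivity of $\rho$ in dimension $3$, note that by the vanishing of the Weyl tensor this is equivalent to surjectivity of the induced Ricci-valued quadratic map, diagonalise the target using $O(3)$-equivariance, and then solve for a suitable ansatz. The preliminary reduction (realising any prescribed $A_0$ at a point via a cubic perturbation of a quadratic potential) is correct. But the decisive step is missing. Local surjectivity of the differential at your $A_0$ (where $f(A_0)=2g$) together with the homogeneity $f(tA)=t^2 f(A)$ only gives you an open cone in the image; since $f$ is quadratic this says nothing about targets of other signatures, as you yourself observe. The assertion that ``every sign pattern of the eigenvalues $(r_1,r_2,r_3)$ is attained'' \emph{is} the theorem, and deferring it to ``a multi-parameter ansatz \ldots cleanly carried out with computer algebra'' is not a proof: a quadratic map $\R^{10}\to\R^6$ whose differential is somewhere surjective need not be onto, and there is no a priori reason that the polynomial system arising from an arbitrary ansatz has \emph{real} solutions for every diagonal target.

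What makes the paper's argument close is a specific choice of ansatz, namely $A = a_1 e_1^3 + a_2 e_2^3 + a_3 e_3^3 + b_{13}e_1^2e_3 + e_2^2e_1 + e_2^2e_3 + b_{31}e_3^2e_1$ with two coefficients pinned to $1$, for which the quadratic system degenerates: the combinations $\alpha-\gamma$ and $\delta$ contain no quadratic terms, so three of the four equations are linear, and after substituting their solution the remaining quadratic equation for $a_3$ also collapses to a linear one. This yields explicit rational formulas for the coefficients valid whenever $\lambda_1\neq\lambda_3$ (which, after permuting the eigenbasis, fails only when the Ricci tensor is a multiple of the identity, a case handled by a separate explicit $A$). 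Exhibiting an ansatz with this degeneration property --- which converts an existence question for real solutions of a quadratic system into elementary linear algebra --- is the actual content of the proof of surjectivity; your proposal correctly locates the difficulty but does not resolve it.
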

\begin{proof}

First note that given a tensor $A \in S^3 T^*_p$ we can find a Hessian metric whose $S^3$-tensor at $p$ is given by $A$. This true because the $S^3$-tensor is determined (up to raising and lowering of indices and a constant) by $\partial_i \partial_j \partial_k \phi$ where $\phi$ is the potential defining the Hessian metric. So to prove the theorem we simply need to show that $\rho$ is surjective in dimension $3$.
Dimension counting suggests that this is likely to be true but does not give a proof.

In $3$ dimensions, the curvature tensor is determined entirely
by the Ricci tensor. Since any symmetric matrix can be diagonalized using orthogonal transformations, we can choose an orthonormal basis $\{ e_1, e_2, e_3 \}$ for the cotangent space such that the Ricci tensor is given by $r = \lambda_1 e_1^2 + \lambda_2 e_2^2 + \lambda_3 e_3^2$.

We define $\rho_2$ to be the composition of $\rho$ with the contraction $R\indices{_{ijk}^l} \rightarrow R\indices{_{iak}^a}$ used to define the Ricci tensor.

Now consider symmetric tensors $A$ of the following form:
\[ A = a_1 e_1^3 + a_2 e_2^3 + a_3 e_3^3 + b_{13} e_1^2 e_3 + e_2^2 e_1 + e_2^2 e_3 + b_{31} e_3^2 e_1. \]
A straightforward computation (which it is easiest to get a computer to perform) shows that for such an $A$ we have:
\[ \rho_2( A ) = \alpha e_1^2 + \beta e_2^2 + \gamma e_3^2 + \delta e_1 e_3 \]
where
\begin{eqnarray*}
\alpha & = & 8 (1 - (1 + 3 a_3) b_{13} + b_{13}^2 + b_{31}^2 - 3 a_1 (1 + b_{31})) \, , \\
\beta & = & -8 (-2 + 3 a_1 + 3 a_3 + b_{13} + b_{31}) \, , \\
\gamma & = & 8 (1 + b_{13}^2 - 3 a_3 (1 + b_{13}) - b_{31} - 3 a_1 b_{31} + b_{31}^2) \, , \\
\delta & = & -8 (-1 + b_{13} + b_{31}) \, .
\end{eqnarray*}
Notice that $\alpha - \gamma$ does not contain any quadratic terms.
So to find $A$ of this form with $\rho_2(A)$ equal to $r$ we must solve the three linear equations $\alpha - \gamma = \lambda_1 - \lambda_3$, $\beta=\lambda_2$ and $\delta=0$ and the quadratic equation $\alpha = \lambda_1$. If one first solves the linear equations to find expressions for $a_1$, $b_{13}$ and $b_{31}$ in terms of the $\lambda_i$ and $a_3$ one can then use the quadratic equation to find an expression for $a_3$ in terms of the $\lambda_i$. In fact a cancellation occurs in the quadratic terms and one obtains a linear equation for $a_3$.

The end result is that, so long as $\lambda_1 \neq \lambda_3$ we always have a solution of the form:
\begin{eqnarray*}
a_3 & = & \frac{\lambda_1^2 + 16 \lambda_2 - \lambda_1 \lambda_2 - 16 \lambda_3 - 
    2 \lambda_1 \lambda_3 + \lambda_2 \lambda_3 + 
    \lambda_3^2}{48 (\lambda_1 - \lambda_3)} \\
a_1 &=& \frac{1}{24} (8 - 24 a_3 - \lambda_2) \\
b_{13} &=& 3 a_3 +\frac{1}{16}( - \lambda_1 + \lambda_2 + \lambda_3) \\
b_{31} &=& 1 - 3 a_3 + \frac{1}{16}(\lambda_1 - \lambda_2 - \lambda_3)
\end{eqnarray*}

The only case that is not covered by this result is when the Ricci tensor is diagonal. In this case it is easy to show that one can take $A$ defined by:

\[ A  = \frac{20 - \lambda}{48} e_1^3 + e_2^2 e_1 + \frac{4-\lambda}{16} e_3^2 e_1 + e_1 e_2 e_3 \]

\end{proof}

We end this section on curvature identities by raising some questions.
\begin{enumerate}[(i)]
\item Can one find a short proof of equation \eqref{eqn:cubic} that does not require any use of a computer? 
\item Can one efficiently find all the explicit curvature conditions that must be satisfied by a Hessian metric in a fixed dimension $n \geq 5$? Can one find all the curvature conditions that hold for all $n$?
\item For large enough $n$, is the condition that the curvature lies in the image of $\rho$ a sufficient condition for a metric to be Hessian?
\end{enumerate}

\section{All analytic 2 metrics are Hessian}
\label{section:cartanKahler}

In this section we will prove that all analytic 2 manifolds $(M,g)$
are Hessian. This result has been obtained independently by Bryant \cite{bryant}. The proof is an application of Cartan--K\"ahler theory. See \cite{bryant1991exterior} or \cite{seiler} for an overview of Cartan--K\"ahler theory. Our presentation is closer to that of Seiler.

Let $V$ and $W$ be vector bundles over an n-dimensional manifold $M^n$ and
let $D:\Gamma(V) \longrightarrow \Gamma(W)$ be an order $k$ differential operator mapping sections of $V$ to sections of $W$. Equivalently, $D$ is a mapping from $k$-jets of $V$ at $p$ to elements of $W_p$.

Recall that one has the exact sequence $0 \longrightarrow S^k T^* \ot V \longrightarrow J_{k}(V) \longrightarrow J_{k-1}(V) \longrightarrow 0$. This exact sequence is a consequence of the fact that derivatives commute. It tells us that a $k$-jet is determined by a $(k-1)$-jet together with an element of the $k$-th symmetric power.

As a result of this exact sequence, the highest order terms of the differential operator $D$ defines a map
$\sigma: S^k T^*_p \ot V_p \longrightarrow W_p$ called the {\em symbol} of the differential operator. We will assume from now on that the differential operator is quasilinear so the symbol is a linear map.

Note that if the symbol is onto then any $(k-1)$-jet can be extended to a $k$-jet solution of the equation $D(v)=w$. We will generalize this observation.

By differentiating the equation $D(v)=w$ one can obtain a $(k+1)$-th order differential equation. The top order term of this equation defines the first
prolongation of the symbol $\sigma_1$. This is a map $\sigma_1: S^{k+1} T^* \ot V \longrightarrow T^* \ot W$. More generally, one can differentiate the equation $i$ times to get the $i$-th prolongation of the symbol $\sigma_i:S^{k+i} T^* \ot V \longrightarrow S^i T^* \ot W$. If all of the first $i$ prolongations of the symbol are onto then any $k$-th jet solution of the differential equations can be extended to a $(k+i)$-th jet solution.

On the  other hand, if a particular prolongation of the symbol is not onto then this indicates that one may have found an obstruction to the local existence of
solutions to the differential equation. The calculation in the previous section fits this pattern: the curvature identities we used are all consequence of the fact that derivatives commute, equivalently they are algebraic consequences of the fact that the symbol acts on the symmetric power of $T^*$.

When one wishes to prove that $\sigma_i$ is onto
for all $i$ one can use {\em Cartan's test} which we will now describe. Given a differential
equation as above and a basis $\{v_1, v_2, \ldots v_n\}$ for $T^*M$, define the map:
\[\sigma_{i,m}: S^{k+i} \langle v_1, v_2, \ldots v_m \rangle \ot V_p \longrightarrow S^i T^*_p \ot W_p\]
to be the restriction of $\sigma_i$.  Define $g_{i,m}:= \dim \ker
\sigma_{i,m}$. If one can find a basis $\{v_1, v_2, \ldots v_n\}$
and a number $\alpha$ such that $\sigma_i$ is onto for all $i \leq\alpha$ and such that $g_{\alpha,n} = \sum_{\beta=0}^k g_{\alpha-1
\beta}$ then the differential equation is said to be involutive.
It turns out that this implies that $\sigma_{\alpha+i}$ is onto for all $i$. If one
is working in the analytic category, one can then prove that
solutions to the differential equation exist
\cite{goldschmidt1967b, guilleminSternberg,cartan}.

We illustrate these ideas by proving that any analytic Riemannian metric on a $2$-manifold locally admits a $g$-dually torsion free flat connection --- in other words it is Hessian.

\begin{theorem}
Any analytic Riemannian metric on a $2$-manifold locally admits a $g$-dually torsion free flat connection.
\end{theorem}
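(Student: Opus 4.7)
My plan is to apply the analytic Cartan--K{\"a}hler theorem to the first-order PDE obtained from Proposition 1. By that proposition, it suffices to construct an analytic section $A$ of $V := S^3 T^*$ such that $\ol{\D} := \D + A$ has vanishing curvature; using \eqref{eqn:coordinateFreeCurvature} this becomes a quasilinear first-order equation $D(A) = 0$ of the form described above, with target bundle $W$ the space of tensors carrying the symmetries of $\ol R$. In $n=2$, $W$ has fibre dimension $4$ since the first Bianchi identity is vacuous there.

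The principal symbol is $\sigma(\xi \ot B)_{ijkl} = \xi_i B_{jkl} - \xi_j B_{ikl}$, read off from \eqref{eqn:coordinateFreeCurvature}. Its image lies in the subspace $W' := \Lambda^2 T^* \ot S^2 T^*$ of $W$; the complementary summand $W'' := \Lambda^2 T^* \ot \Lambda^2 T^*$ carries only the zeroth-order content of $D$, namely the algebraic curvature identity \eqref{eqn:curvatureFromS3}. So the full equation splits into an algebraic constraint $\rho(A) = R$ at each point, plus a genuine first-order PDE whose symbol is surjective onto $W'$. The algebraic constraint can be satisfied at a chosen base point because $\rho$ is surjective when $n=2$: in an orthonormal frame one computes $\rho(A)_{1212} = A_{112}^2 + A_{122}^2 - A_{111}A_{122} - A_{112}A_{222}$, which already covers all of $\R$ as $A_{111}$ varies with, say, $A_{122}=1$ and $A_{112}=A_{222}=0$.

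For Cartan's involutivity test applied to the PDE part I compute $\ker\sigma$: the relation $\xi_i B_{jkl} = \xi_j B_{ikl}$ combined with the symmetry of $B \in V$ in $(jkl)$ forces fully symmetric $4$-tensors, so $\ker\sigma = S^4 T^*$, of dimension $5$ when $n=2$. Restricting to $\langle v_1 \rangle \ot V$ for a basis covector $v_1$ forces every remaining index to equal $1$, giving $g_{0,0}=0$, $g_{0,1}=1$, $g_{0,2}=5$. The same argument applied to the first prolongation yields $\ker \sigma_1 = S^5 T^*$, of dimension $6$, so $g_{1,2}=6$. Cartan's numerical condition $g_{1,2} = g_{0,0}+g_{0,1}+g_{0,2} = 0+1+5 = 6$ then holds, and the symbol is involutive.

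The main technical hurdle is reconciling the algebraic and differential parts of the system: I need to verify that the first prolongation of \eqref{eqn:curvatureFromS3} is already a differential consequence of the first-order part of $D(A)=0$, so that imposing the algebraic constraint does not destroy involutivity of the combined system. In $n=2$ this reduces to identities involving $\rho(A)$, $\D A$ and the ordinary Bianchi identity for $R$, which can be checked by direct computation. Granted this compatibility, the analytic Cartan--K{\"a}hler theorem delivers a local analytic $A$ with $\ol R(A)=0$ from initial data at the base point, and Proposition 1 then provides the desired Hessian coordinates on a neighbourhood.
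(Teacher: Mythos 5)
Your overall strategy is the same as the paper's: split the flatness equation $\ol R=0$ into its $\Lambda^2\ot\Lambda^2$ part (the algebraic constraint $\rho(A)=R$, which is equation \eqref{eqn:curvatureFromS3}) and its $\Lambda^2\ot S^2$ part (a genuine first-order PDE), note that $\rho$ is surjective in dimension $2$, and invoke Cartan--K\"ahler. But there is a genuine gap at exactly the point you flag and then wave through with ``granted this compatibility.'' Your Cartan test is carried out for the \emph{unconstrained} PDE part on the full bundle $V=S^3T^*$ (giving $g_{0,1}=1$, $g_{0,2}=5$, $g_{1,2}=6$). That is not the system whose solutions you need: the actual system consists of the PDE \emph{together with} the pointwise constraint $\rho(A)=R$, and involutivity of the former does not imply formal integrability of the latter. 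Differentiating the constraint adds first-order equations whose symbol is $d\rho|_A$ applied to $\D A$, and these are not visibly in the span of the symbol of the $\Lambda^2\ot S^2$ equations; whether the combined system remains involutive is precisely the content of the theorem, not a routine check. Deferring it to ``identities \ldots which can be checked by direct computation,'' without performing the computation, leaves the proof incomplete.

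The paper resolves this by a concrete reduction rather than a compatibility lemma: writing $\iota(A)=a\,e^1\odot e^1\odot e^1+\cdots+d\,e^2\odot e^2\odot e^2$, the scalar constraint \eqref{eqn:curvatureCondition2} is solved explicitly for $a$ in terms of $b,c,d,s$ on the open set $c\neq 0$, and the substitution turns the remaining three equations \eqref{eqn:derivativeCondition} into a first-order quasilinear system in the three unknowns $(b,c,d)$ with no residual algebraic constraint. The Cartan characters of \emph{that} system are $g_{0,1}=0$, $g_{0,2}=3$, $g_{1,2}=3$ --- not your $1,5,6$ --- and the test closes. If you want to keep your formulation with all four components of $A$ as unknowns, you must either carry out the completion-to-involution of the mixed algebraic--differential system (restricting the symbol to $T^*\ot\ker d\rho|_A$ and recomputing the characters there, which reproduces the paper's $0,3,3$), or perform the elimination as the paper does. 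Your computation of $\ker\sigma=S^4T^*$ and the surjectivity of $\rho$ in dimension $2$ are both correct, but they do not by themselves yield the theorem.
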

\begin{proof}

The existence of such a connection is equivalent to finding a tensor $A \in T^* \ot T^* \ot T$ with:
\begin{enumerate}[(i)]
\item $R_{X,Y}Z + 2 \ol{\D}_{[X} A_{Y]} Z + 2 A_{[X}A_{Y]} Z = 0$ for all vectors $X$, $Y$ and $Z$.
\item $\iota(A) \in S^3 T^*$ where $\iota:T^* \ot T^* \ot T \longrightarrow T^* \ot T^* \ot T^*$ is the isomorphism determined by raising the final index using the metric.
\end{enumerate}
It is well known \cite{amari} that if these conditions hold then $\ol{\D}$ will also be torsion free and flat.

We choose an analytically varying orthonormal basis $\{e^1, e^2\}$ for $T^*$
with dual basis $\{e_1,e_2\}$. We write tensors with respect to this basis using indices and we will use the Einstein summation convention. So for example we have $R = R\indices{_{ijk}^l} \ot e^i \ot e^j \ot e^k \ot e_l$. In this index notation, we can write our curvature condition as follows:
\[
R\indices{_{ijk}^l} + 2 \D_{[i} A\indices{_{j]k}^l}  + A\indices{_{i \alpha}^l} A\indices{_{jk}^\alpha}  A\indices{_{j \alpha}^l}A\indices{_{ik}^\alpha} = 0.
\]
If we raise the $k$ index using the metric and then antisymmetrize over $k$ and $l$ we get
\begin{equation}
R\indices{_{ij}^{kl}} + A\indices{_{i \alpha}^l} A\indices{_{j}^{k \alpha}}
- A\indices{_{j \alpha}^{l}} A\indices{_{i}^{k \alpha}}
         - A\indices{_{i \alpha}^k} A\indices{_{j}^{l\alpha}}
         + A\indices{_{j \alpha}^k} A\indices{_{i}^{l\alpha}} = 0.
\label{eqn:curvatureCondition}         
\end{equation}
Symmetrizing over $k$ and $l$ on the other hand yields:
\begin{equation}
\D\indices{_{[i}} A\indices{_{j]}^{(kl)}} = 0
\label{eqn:derivativeCondition}         
\end{equation}
where the parentheses indicate symmetrization.

In two dimensions, the space $\Lambda^2 T \ot (\Lambda^2 T)^* $ is one dimensional, so the map sending a tensor with indices $B\indices{_{ij}^{kl}}$ to $B\indices{_{ab}^{ab}}$ is an isomorphism. Applying this isomorphism to
equation \eqref{eqn:curvatureCondition} yields the equivalent condition:
\begin{equation}
s - 2 A\indices{_{i \alpha}^j} A\indices{_{j}^{i \alpha}}
- 2 A\indices{_{j \alpha}^{j}} A\indices{_{i}^{i \alpha}} = 0.
\label{eqn:curvatureCondition2}         
\end{equation}
where $s=R\indices{_{ij}^{ij}}$ is the scalar curvature. The condition $\iota(A) \in S^3 T^*$ allows us to write:
\begin{eqnarray*}
 \iota(A) & = & a \, e^1 \odot e^1 \odot e^1 + b \, e^1 \odot e^1 \odot e^2 \\
 && {}+ c\, e^1 \odot e^2 \odot e^2 + d \, e^2 \odot e^2 \odot e^2
 \end{eqnarray*}
for some functions $a$, $b$, $c$ and $d$. We can then rewrite equation \eqref{eqn:curvatureCondition2} as:
\[  s - \frac{4}{9}( 3 a c + 3 b d - b^2 - c^2 ) = 0 \]
If we assume that $c \neq 0$ we can solve this last equation to compute $a$ in terms of $b$, $c$, $d$ and $s$. This allows us to write the equations for a $g$-dually torsion free flat connection with $c\neq 0$ as a differential equation in the three real functions $b$, $c$ and $d$. Equation \eqref{eqn:curvatureCondition} will be automatically satisfied. So we need only consider equation \eqref{eqn:derivativeCondition}. Because of its symmetries this has three independent components. If we write $x_i$ for the derivative of a function $x$ in the direction $e_i$ then the three components of \eqref{eqn:derivativeCondition}
can be written to highest order as:
\begin{eqnarray*}
-3 a_2 + b_1 = \alpha b_2 + \beta c_2 + \gamma d_2 + b_1 & = & \hbox{ terms without derivatives} \\
-b_2 + c_1 & = & \hbox{ terms without derivatives } \\
-c_2 + 3 d_1 & = & \hbox{ terms without derivatives } \\
\end{eqnarray*}
Here $\alpha$, $\beta$ and $\gamma$ are functions depending upon $s$, $b$, $c$, and $d$. The precise formulae are not important for our purposes.

If we think of $(b,c,d)$ as defining a section of the trivial $\R^3$
bundle, the condition that $(b,c,d)$ defines a $g$-dually flat
connection is a first order differential equation with symbol $\sigma:T+p^* \ot \R^3 \longrightarrow \R^3$ given by:
\[ \sigma = \left( \begin{array}{c c c c c c}
1 & 0 & 0 & \alpha & \beta & \gamma \\
0 & 1 & 0 & -1 & 0 & 0 \\
0 & 0 & 3 & 0 & -1 & 0
\end{array} \right) .
\]
Here we have used the standard basis $\{v_1, v_2, v_3\}$ for $\R^3$ and
the basis $\{e^1 \ot v^1, e^1 \ot v^2,  e^1 \ot v^3, 
e^2 \ot v^1, e^2 \ot v^2,  e^2 \ot v^3 \}$ for $T^* \ot \R^3$.

We conclude that $\sigma$ has rank 3. Since $\sigma$ is onto there is no obstruction to extending any $0$-jet $(b,c,d)$ with $c \neq 0$ to a $1$-jet solution of our differential equation.

With respect to the basis $\{ e^1, e^2 \}$ for $T^*$ the matrix
of $\sigma_{0,1}$ is simply:
\[ \sigma = \left( \begin{array}{c c c}
1 & 0 & 0  \\
0 & 1 & 0  \\
0 & 0 & 3 
\end{array} \right). \]
So $g_{0,1}=0$ and $g_{0,2}=3$.

The first prolongation of the symbol, $\sigma_1$ is similarly easy to calculate. 
$\sigma_1: S^2 T^* \longrightarrow T^* \ot \R^3$ is given by:
\[
\sigma_1 = \left(
\begin{array}{ccccccccc}
 1 & \alpha & 0 & 0 & \beta & 0 & 0 & \gamma & 0 \\
 0 & -1 & 0 & 1 & 0 & 0 & 0 & 0 & 0 \\
 0 & 0 & 0 & 0 & -1 & 0 & 3 & 0 & 0 \\
 0 & 1 & \alpha & 0 & 0 & \beta & 0 & 0 & \gamma \\
 0 & 0 & -1 & 0 & 1 & 0 & 0 & 0 & 0 \\
 0 & 0 & 0 & 0 & 0 & -1 & 0 & 3 & 0 \\
\end{array}
\right)
\]
This matrix is written with respect to the basis $\{ e^1 \odot e^1 \ot v^1, e^1 \odot e^1 \ot v^2,  e^1 \odot e^1 \ot v^3, 
e^1 \odot e^2 \ot v^1, e^1 \odot e^2 \ot v^2,  e^1 \odot e^2 \ot v^3,  e^2 \odot e^2 \ot v^1,   e^2 \odot e^2 \ot v^2,   e^2 \odot e^2 \ot v^3 \}$ for $S^2 T^* \ot \R^3$ and the same basis for $T^* \ot \R$ that we used earlier.

By permuting columns, one can transform this matrix for $\sigma_1$ into echelon form. Thus it has rank $6$ irrespective of $\alpha$, $\beta$ and $\gamma$. So $g_{0,1} + g_{0,2} = g_{1,2}$. The result now follows by Cartan's test.

\end{proof}

\bibliographystyle{alpha}
\bibliography{dualflat}

\end{document}